\date{}
\theoremstyle{plain}
\newtheorem{theorem}{Theorem}
\theoremstyle{definition}
\theoremstyle{remark}
\newtheorem{remark}[theorem]{Remark}
\def\C{{\mathbb C}}
\def\N{{\mathbb N}}
\title{Minimal Cobordisms between thin and thick torus knots}
\author{S.~Baader, L.~Lewark, F.~Misev, P.~Tru\"ol}
\begin{document}
\begin{abstract} We determine the locally flat cobordism distance between torus knots with small and large braid index, up to high precision. Here small means $2$, $3$, $4$, or $6$. As an application, we derive a surprising fact about torus knots that appear as cross-sections of almost minimal cobordisms between two-stranded torus knots and the trivial knot.
\end{abstract}

\maketitle

\section{Introduction}

The topological 4-genus $g_4$ is a hard to compute knot invariant, despite being well-studied. Even the precise 4-genus of torus knots remains unknown. The best known lower bound for $g_4$ is given by half the maximum of the Levine--Tristram signature function, see e.g.~\cite{C}. In~\cite{BBL}, the question is raised whether this bound is sharp for torus knots, and evidence is provided for infinite families of torus knots with braid index 3, 4, and 6.
In this paper, we study a relative version of this question.
Given two knots $K$ and $J$ in $S^3$, we define the \emph{locally flat cobordism distance $d_{\chi}(K,J)$} to be the absolute value of the largest Euler characteristic among all topological, locally flat, connected cobordisms in $S^3 \times [0,1]$ between $K$ and $J$. When $J=O$ is the trivial knot, we obtain $d_{\chi}(K,O)=2g_4(K)$, twice the \emph{topological 4-genus} of~$K$. For coprime positive integers $p, q$, we denote the corresponding positive torus knot by $T(p,q)$, and its classical signature invariant \cite{Tro,Mur} by~$\sigma(T(p,q))$. We use the convention that positive torus knots have positive signatures, e.g.~$\sigma(T(2,3))=2$.

\begin{theorem}
\label{bound246}
There exist positive constants $a,b>0$ with the following property.
For all $k \in \{2,4,6\}$ and for all $m,N \in \N$ with $\gcd(k,N)=1$ and $N \geq \frac{m^2}{k}+\frac{2m^2}{k^3}$:
$$\left| d_{\chi}(T(m,m+1),T(k,N))-\sigma(T(k,N))+\sigma(T(m,m+1)) \right| \leq am+b.$$
\end{theorem}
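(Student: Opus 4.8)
\emph{The lower bound.} One of the two inequalities concealed in the absolute value is soft. The classical signature is a locally flat cobordism obstruction: if $\Sigma\subset S^3\times[0,1]$ is a connected, locally flat cobordism between knots $K$ and $J$, then $|\sigma(K)-\sigma(J)|\le b_1(\Sigma)=1-\chi(\Sigma)$, because $-1$ is never a zero of a knot's Alexander polynomial (so the Levine--Tristram nullity at $\omega=-1$ vanishes) and the Levine--Tristram signature at $\omega=-1$ varies by at most the first Betti number along a locally flat cobordism. Hence $d_\chi(K,J)\ge|\sigma(K)-\sigma(J)|-1$. I would then check that the hypothesis $N\ge\frac{m^2}{k}+\frac{2m^2}{k^3}$ forces $\sigma(T(k,N))\ge\sigma(T(m,m+1))$: for fixed $k$ the quantity $\sigma(T(k,N))$ grows linearly in $N$, while $\sigma(T(m,m+1))$ grows quadratically in $m$, and for $k\in\{2,4,6\}$ the relevant coefficients are such that the two agree to leading order exactly at $N=\frac{m^2}{k}+\frac{2m^2}{k^3}$ -- this is precisely where the numerology of the hypothesis is used. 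The absolute value in the theorem then opens up with the stated sign, giving $d_\chi(T(m,m+1),T(k,N))\ge\sigma(T(k,N))-\sigma(T(m,m+1))-1$, the lower half of the claim with $a=0$.

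\emph{The upper bound.} The substance of the theorem is the reverse inequality: I must produce a locally flat cobordism $\Sigma$ from $T(m,m+1)$ to $T(k,N)$ with $|\chi(\Sigma)|\le\sigma(T(k,N))-\sigma(T(m,m+1))+am+b$. Note at the outset that $\Sigma$ cannot be obtained by concatenating cobordisms through the unknot, since that costs $|\chi|\asymp 2g_4^{\mathrm{top}}(T(m,m+1))+2g_4^{\mathrm{top}}(T(k,N))$, which exceeds the target bound by a positive multiple of $m^2$; the cobordism must be direct and must realise the signature difference essentially sharply. The plan is to build $\Sigma$ as a movie of band moves: starting from a braid presentation of $T(k,N)$, Markov-stabilised up to $m$ strands, carry out a sequence of band moves ending at the $m$-strand torus braid of $T(m,m+1)$, arranged so that all but $O(m)$ of the bands lower the classical signature by exactly one. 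Since a single band move changes $\sigma$ by at most one and the accumulated Euler characteristic by $-1$, and since we already know $|\chi(\Sigma)|\ge\sigma(T(k,N))-\sigma(T(m,m+1))-1$ from the signature bound, such a movie automatically has $|\chi(\Sigma)|=\sigma(T(k,N))-\sigma(T(m,m+1))+O(m)$. In practice I expect this to decompose into (i) efficient topological cobordisms between torus knots of the fixed small braid index $k$ -- removing full twists on $k\le 6$ strands, which is signature-sharp up to a bounded error -- reducing matters to a single comparison of $T(m,m+1)$ with one torus knot $T(k,N^{\ast})$ with $N^{\ast}=\Theta(m^2)$, together with (ii) a Freedman-style slicing of Alexander-polynomial-trivial pieces to remain in the locally flat category. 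Summing Euler characteristics and substituting the signature asymptotics from the previous step yields the bound, the term $am$ collecting the $O(m)$ errors: the finitely many ``wasteful'' band moves, the uniform error in the torus-knot signature estimates, and the gap between $\sigma(T(m,m+1))$ and $2g_4^{\mathrm{top}}(T(m,m+1))$.

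\emph{Main obstacle.} All of this is routine except the construction of the signature-optimal movie in the upper bound. It must be simultaneously as cheap as the signature bound permits -- nearly every band realising the maximal jump $|\Delta\sigma|=1$ -- and must convert one torus braid into another of different width; keeping track of the signature by hand along such a cobordism is exactly what restricts the theorem to $k\in\{2,4,6\}$, the braid indices for which the Seifert form of $T(k,\cdot)$ is rigid enough, and it is also what dictates the precise shape of the hypothesis on $N$ (which is the condition ensuring the target $T(k,N)$ has large enough signature for the construction to run). Pinning down the optimal constant $a$ is the last, and delicate, point.
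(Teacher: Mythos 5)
Your lower bound is fine (and in fact cleaner than you state: a connected cobordism of genus $g$ between knots has Euler characteristic $-2g$, and the signature at $\omega=-1$ changes by at most $2g$, so $d_\chi(K,J)\ge|\sigma(K)-\sigma(J)|$ with no $-1$). You have also correctly identified the shape of the problem: the upper bound is the substance, the cobordism must realize the signature difference essentially sharply, and naively concatenating through the unknot is too expensive by a multiple of $m^2$.

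But the construction you sketch for the upper bound is not the one the paper uses, and as written it has a genuine gap. You propose a ``movie of band moves'' in which almost every band lowers $\sigma$ by exactly one; that is a \emph{smooth} cobordism, and for torus knots of braid index $k\ge 3$ it cannot be signature-sharp. Concretely, removing a full twist from $k$ parallel strands by crossing changes costs about $k(k-1)$ bands while the classical signature drops by only about $k^2/2$; the two agree for $k=2$ but diverge for $k\ge 3$. So your step (i) — ``efficient topological cobordisms between torus knots of fixed small braid index $k$, removing full twists, signature-sharp up to bounded error'' — is not something a band-move movie delivers; it is itself a theorem of Baader--Banfield--Lewark whose proof goes through Freedman's disc theorem. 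Your step (ii), ``Freedman-style slicing of Alexander-polynomial-trivial pieces,'' names the right black box but gives no mechanism to invoke it, and this is precisely where the whole difficulty lives.

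The paper's actual route is different and avoids constructing any explicit movie. It relies on McCoy's null-homologous twisting theorem: if a knot is related to the unknot by $x$ positive and $x$ negative null-homologous full twists, then $g_4^{\mathrm{top}}\le x$. The key trick you are missing is a cabling construction. After arranging $m=kl$, the paper takes the cable $K_k=C_{k,1}\bigl(T(l,l+1)\bigr)$. Two facts about $K_k$ do all the work: (a) by Litherland's cabling formula, $\sigma(K_k)=0$ because $k$ is even; (b) $K_k$ can be brought to the unknot by $\tfrac12(l-1)l$ \emph{positive} null-homologous twists (one per crossing of the $T(l,l+1)$ braid, cabled $k$-fold). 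One then chooses $T(k,n)$ with $\sigma(T(k,n))\approx l^2$ and, by the cited untwisting result for braid index $k\le 6$, about $\tfrac12 l^2$ positive twists bringing it to the unknot. The connected sum $K_k\,\#\,T(k,-n)$ therefore reaches the unknot with $\approx\tfrac12 l^2$ positive and $\approx\tfrac12 l^2$ negative twists, so McCoy gives $g_4^{\mathrm{top}}\bigl(K_k\#T(k,-n)\bigr)\lesssim\tfrac12 l^2$, matching the signature lower bound since $\sigma(K_k\#T(k,-n))\approx -l^2$. This turns into $d_\chi(K_k,T(k,n))\approx\sigma(T(k,n))-\sigma(K_k)$, and a final uniform framing increase (and a little more twisting on the thin side for larger $N$) converts $K_k$ into $T(m,m+k)$ and $T(k,n)$ into $T(k,N)$ within affine error in $m$. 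So the decisive idea is not an efficient braid movie at all, but a signature-zero satellite of $T(l,l+1)$ together with a cancellation of positive and negative null-homologous twists in a connected sum, with Freedman entering solely through McCoy's theorem. Without that device your upper bound does not close.
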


The order of growth of the signature invariants is approximately 
\begin{align*}
	\sigma(T(m,m+1))&\approx \frac{m^2}{2} \qquad \text{up to an affine error in} \, m,\\
		\text{and}\qquad  \sigma(T(k,N))&\approx \frac{kN}{2}>\frac{m^2}{2} \qquad \text{up to a constant error.}
\end{align*}
This follows from \cite[Theorem~5.2]{GLM}. 
Therefore the bound on the error term~${am+b}$ in Theorem~\ref{bound246} is asymptotically negligible. In the special case $m=1$, the knot $T(1,2)=O$ is trivial, so we obtain $d_{\chi}(O,T(k,N))\approx \sigma(T(k,N))$, up to a constant error of at most $a+b$. This is compatible with the known fact that $2g_4(T(k,N))\approx \sigma(T(k,N))$, up to a constant error, for $k\in \{2,3,4,6\}$; see~\cite{BBL}. In fact, our proof of Theorem~\ref{bound246} is based on the latter result. This is the reason for the restriction on the value of~$k$.

For odd parameters~$k$, the situation is slightly different, in that the signature does not provide the best lower bound for the corresponding cobordism distance (see e.g.~\cite[Section~3]{BBL}). Nevertheless, we still obtain a sharp result for~${k=3}$ up to an affine error. We denote by $\sigma_{\zeta_3}(K)$ the Levine--Tristram signature invariant of~$K$ at $\zeta_3=e^\frac{2 \pi i}{3}$, as defined in~\cite{L,T}, see also~\cite{C}.

\begin{theorem}
\label{bound3}
There exist positive constants $a,b>0$ with the following property.
For all $m,N \in \N$ with $\gcd(3,N)=1$ and $N \geq \frac{5}{12}m^2$:
$$\left| d_{\chi}(T(m,m+1),T(3,N))-\sigma_{\zeta_3}(T(3,N))+\sigma_{\zeta_3}(T(m,m+1)) \right| \leq am+b.$$
\end{theorem}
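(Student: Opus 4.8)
The plan is to parallel the proof of Theorem~\ref{bound246}, carrying out every signature-theoretic step at $\omega=\zeta_3$ rather than at $\omega=-1$, and feeding in the sharp topological statement for three-stranded torus knots, $2g_4(T(3,N))=\sigma_{\zeta_3}(T(3,N))+O(1)$ (\cite{BBL}), in place of the statement for $k\in\{2,4,6\}$.

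\textbf{Lower bound.} First I would check that $\zeta_3$ is a regular value of both Alexander polynomials: a primitive third root of unity is a zero of $\Delta_{T(p,q)}$ only if $3\mid pq$ while $3\nmid p$ and $3\nmid q$, which fails for $(p,q)=(3,N)$ (there $3\mid p$) and fails for $(p,q)=(m,m+1)$ (there $3\mid m(m+1)$ already forces $3\mid p$ or $3\mid q$). Hence $\sigma_{\zeta_3}$ is defined without averaging, and the Levine--Tristram signature bound in the locally flat category gives $-\chi(C)\ge\bigl|\sigma_{\zeta_3}(T(3,N))-\sigma_{\zeta_3}(T(m,m+1))\bigr|$ for every connected locally flat cobordism $C$ between the two knots. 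Using the asymptotics of $\sigma_{\zeta_3}$ on the two families (Litherland's formula, or \cite[Theorem~5.2]{GLM}) I would check that $N\ge\tfrac{5}{12}m^{2}$ makes this difference non-negative, so that the absolute value drops; this gives the lower half of the estimate, with $a>0$, $b\ge1$.

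\textbf{Upper bound.} This is where the work lies, and I would proceed as for Theorem~\ref{bound246}: build, in one go, a locally flat cobordism from $T(m,m+1)$ to $T(3,N)$ of Euler characteristic at least $-\sigma_{\zeta_3}(T(3,N))+\sigma_{\zeta_3}(T(m,m+1))-(am+b)$. The engine is the topological compression behind $2g_4(T(3,N))=\sigma_{\zeta_3}(T(3,N))+O(1)$: inside the fibre surface of a positive-braid knot one isolates a large disjoint family of subsurfaces whose boundary knots have trivial Alexander polynomial, hence bound topological discs by Freedman's theorem, and caps them off. Performing this simultaneously on fibre surfaces of $T(m,m+1)$ and of $T(3,N)$, arranged so that the compressed surface for $T(m,m+1)$ is embedded (up to $O(m)$ band moves) in the one for $T(3,N)$, and then deleting it, yields a cobordism whose Euler characteristic is controlled by the two compressed genera --- that is, by $\tfrac12\sigma_{\zeta_3}(T(3,N))$ and $\tfrac12\sigma_{\zeta_3}(T(m,m+1))$ (the latter equal to $\tfrac12\sigma(T(m,m+1))$ up to $O(m)$, which one reads off from \cite[Theorem~5.2]{GLM}) --- up to an error that has to be kept linear in $m$. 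The hypothesis $N\ge\tfrac{5}{12}m^{2}$ is exactly what ensures that the $T(3,N)$-picture is large enough to house the embedded copy of (the compression of) $T(m,m+1)$ and that the error stays $O(m)$.

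\textbf{Main obstacle.} I expect the hard part to be making the slicing of $T(3,N)$ from \cite{BBL} explicit and flexible enough to locate inside it an embedded copy of the compressed $T(m,m+1)$ with the prescribed Euler characteristic, and controlling all errors uniformly in $m$ and $N$. A second delicate point --- and the reason the statement is phrased with $\sigma_{\zeta_3}$ rather than $\sigma$ --- is to confirm that over the whole range $N\ge\tfrac{5}{12}m^{2}$ the obstruction at $\zeta_3$ dominates every competing one (other Levine--Tristram signatures, the averaged signature, further metabelian invariants), so that the upper and lower bounds actually coincide up to $am+b$; it is this comparison, carried out via \cite[Theorem~5.2]{GLM}, that pins down the threshold $\tfrac{5}{12}$ --- as opposed to the value $\tfrac{m^{2}}{3}+\tfrac{2m^{2}}{27}$ that the even-$k$ formula would predict.
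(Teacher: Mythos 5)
Your opening line promises to parallel the proof of Theorem~\ref{bound246}, but what you then describe for the upper bound is a different construction from the paper's, and the substitute has a real gap. The paper's cobordism is not built by compressing the fibre surfaces of $T(m,m+1)$ and $T(3,N)$ and then embedding one compressed surface in the other. Instead it passes through the $(3,1)$-cable $K_3=C_{3,1}(T(l,l+1))$ with $m\approx 3l$: this knot is connected to $T(m,m+1)$ by a small cobordism (saddles plus framing changes), is unknottable by $\tfrac12(l-1)l$ positive nullhomologous twists, and --- this is the crux --- satisfies $\sigma_{\zeta_3}(K_3)=0$ by Litherland's cabling formula. One then picks $T(3,n)$ with $\sigma_{\zeta_3}(T(3,n))\approx l^2$, feeds $K_3\#T(3,-n)$ into McCoy's theorem to get $g_4\le\tfrac12 l^2+O(1)$, and the matching lower bound is exactly $|\sigma_{\zeta_3}(K_3\#T(3,-n))|=\sigma_{\zeta_3}(T(3,n))$ because the cable's contribution vanishes. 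Your proposal never mentions the cable, Litherland's formula, or McCoy's twisting theorem; the ``embed the compressed $T(m,m+1)$-surface inside the compressed $T(3,N)$-surface up to $O(m)$ band moves'' step is unjustified, and I do not see how to make it work: the two fibre surfaces have incompatible shapes, which is precisely what the cable is introduced to reconcile.

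You have also misdiagnosed why the statement is phrased with $\sigma_{\zeta_3}$. It is not because the $\zeta_3$-signature ``dominates every competing obstruction'' over the range $N\ge\tfrac{5}{12}m^2$. The reason is internal to the construction: for even $k$ the connecting knot $C_{k,1}(K)$ has vanishing classical signature, whereas for odd $k$ it does not, but it does have vanishing Levine--Tristram signature at $\zeta_k$ (Litherland). Hence for $k=3$ one must run the whole argument at $\omega=\zeta_3$. This also explains the ingredient you are missing from~\cite{GG}: since~\cite{BBL} unknots $T(3,n)$ by roughly $\tfrac12\sigma(T(3,n))$ twists, one needs $\sigma_{\zeta_3}(T(3,n))=\sigma(T(3,n))+O(1)$ to translate that count into the $\zeta_3$-world. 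Finally, the threshold $\tfrac{5}{12}$ comes out of the same bookkeeping as in the even case --- one needs $\sigma_{\zeta_3}(T(3,n))\approx l^2$, i.e.\ $n\approx\tfrac34 l^2$, then adds $3l^2$ from the framing increase, yielding $\tfrac{15}{4}l^2=\tfrac{5}{12}m^2$ --- not from a comparison of competing lower bounds.
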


Here the order of growth of the signature invariants is
\begin{align*}
	{\sigma_{\zeta_3}(T(m,m+1))}&\approx\frac{4m^2}{9} \qquad \text{up to an affine error in} \, m,\\
		\text{and}\qquad  \sigma_{\zeta_3}(T(3,N))&\approx \frac{4m^2}{9} \qquad \text{up to a constant error,}
\end{align*}
see \cite[Proposition~5.2]{GG}. 
Therefore, as in Theorem~\ref{bound246}, the error term $am+b$ is asymptotically negligible.

We expect both Theorem~\ref{bound246} and ~\ref{bound3} to generalize to all even (respectively odd) values of~$k$, with the constants $a$, $b$ depending on~$k$, and appropriate lower bounds of the form $N \geq c(k)m^2$. In the case of odd~$k$, one would have to replace the signature invariant $\sigma_{\zeta_3}(K)$ by $\sigma_\omega(K)$ with~$\omega=e^{\pi i(k-1)/k}$.
However, in order to prove these generalizations, one would first have to establish the conjectural approximate equality $2g_4(T(k,N))\approx \sigma(T(k,N))$ for fixed $k$ and all $N$ up to a constant error.


\begin{remark}
We conclude the introduction with an interesting hypothetical consequence of the case $k=2$ in Theorem~\ref{bound246}, provided that the equality $$2g_4(T(m,m+1))\approx \sigma(T(m,m+1))$$
were true up to an affine error in~$m$.

By Theorem~\ref{bound246} for $k = 2$, up to an affine error in $m$, we have
$$d_{\chi}(T(m,m+1),T(2,N))\approx N-\frac{m^2}{2},$$
provided $N \geq \frac{3}{4}m^2$, since $\sigma(T(2,N))=N-1$ and $\sigma(T(m,m+1))\approx \frac{m^2}{2}$.
The hypothetical approximation $2g_4(T(m,m+1))\approx \sigma(T(m,m+1))$ would then imply
$$2g_4(T(2,N))=N-1\approx d_{\chi}(T(m,m+1),T(2,N))+2g_4(T(m,m+1)),$$
again up to an affine error in~$m$. 
This means that between the knot $T(2,N)$ and the trivial knot there would exist a locally flat cobordism of 
 minimal genus, up to an affine error in~$m$, with cross section the knot~$T(m,m+1)$.
A similar statement is unthinkable in the smooth setting, since every knot $K$
that appears as a cross-section
of a minimal genus smooth cobordism between the knot $T(2,N)$ and the trivial knot must satisfy $\sigma(K) = s(K)$ for the Rasmussen invariant $s$~\cite{R}. However, $\sigma(K) \approx s(K)/2$ for $K = T(m,m+1)$ up to an affine error in~$m$.
\end{remark}

The proofs of Theorems~\ref{bound246} and~\ref{bound3} are based on McCoy's twisting method~\cite{MC}, and are conceptionally different from the proofs of similar statements on smooth cobordisms, such as those in~\cite{B,BF,F}. Rather, our method is inspired by recent results on the topological 4-genus, in particular by~\cite{BBL} and~\cite{FMP}.

\section{Twisting up torus knots}

In the proofs of Theorems~\ref{bound246} and~\ref{bound3} we use McCoy's twisting method.
A \emph{null-homologous twist} is an operation that inserts a full twist into an even number of parallel strands of a knot, where the twisting circle and the knot are required to have linking number zero. Null-homologous twists come in two versions, \emph{positive} or \emph{negative}, depending on 
the sign of the inserted full twist. We will need one particular instance of a \emph{positive} null-homologous twist, as shown in Figure~1, where the twisting circle is colored blue. 
In this figure, all strands are assumed 
to have the same orientation, e.g.~from left to right, and the $-1$ and $+1$ in the two boxes refer to a negative and a positive full twist, respectively. Here the total number of strands (which is six in Figure~1) can be replaced by any even number $2n$.

\begin{figure}[htb]
\begin{center}
\raisebox{-0mm}{\includegraphics[scale=1.5]{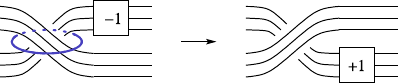}}
\caption{A positive null-homologous twist.}
\end{center}
\end{figure}

McCoy's main result states that if a knot $K$ is related to the trivial knot by a sequence of (at most) $x$ positive and $x$ negative null-homologous twists, then
$g_4(K) \leq x$ (see \cite[Theorem~1]{MC}).
In~\cite{BBL}, McCoy's method was used to determine the topological $4$-genus of all torus knots with braid index $k\in \{2,3,4,6\}$, up to a constant error. In fact, it was shown in~\cite{BBL} that all of these knots~$K$ can be transformed into the trivial knot using $\frac{1}{2}\sigma(K)$ positive null-homologous twists, up to a small constant error. For example, for $k=3$ and $K=T(3,N)$, the error is at most $2$.
We will use these facts in the proof of Theorem~\ref{bound246} below.

The other main idea of the proof is to relate the two main players of Theorem~\ref{bound246}, the torus knots $T(m,m+1)$ and $T(k,N)$, to two other knots, a knot denoted $K_k$ and the torus knot $T(k,n)$, for an appropriate choice of $n$. The latter two knots will fulfill $d_\chi(K_k,T(k,n))\approx \sigma(T(k,n))-\sigma(K_k)$, up to an affine error in $m$, which will imply the desired approximation for the former two knots. 
The knot $K_k$ is defined as the $(k,1)$-cable of the smaller torus knot $T(l,l+1)$, where $l=\lfloor m/k\rfloor$. The crucial insight behind this choice is that the knot $K_k$ is, in fact, close to the torus knot $T(m,m+1)$, in terms of $d_\chi$ and $\sigma$, up to an affine error in~$m$. A~sequence of unknotting crossing changes for $T(l,l+1)$ translates to an untwisting sequence for~$K_k$, which in turn will provide the desired estimates for $T(m,m+1)$. The following proof is based on these ideas.
%
\begin{proof}[Proof of Theorem~\ref{bound246}]
We prove the desired statement for all values ${k \in \{2,4,6\}}$ simultaneously. 
Throughout the proof, unless indicated otherwise, we write $\approx$ for equalities that hold up to an additive error term bounded by an affine function $E(m)=am+b$. We use the notation~$\sigma_i$ for the Artin braid generators.

Our first observation is that we need only consider the case where $m$ is a multiple of $k$. 
In fact, by choosing $l$ such that $m-k < kl \leq m$,
the standard braid diagram $(\sigma_1 \ldots \sigma_{m-1})^{m+1}$ of the torus knot $T(m,m+1)$ can be transformed into the standard braid diagram of the torus knot $T(kl,kl+1)$ by a sequence of at most $2(k-1)m$ saddle moves. Indeed, we can use $(m-kl)(m-1)+(m-kl)(kl+1) = (m-kl)(m+kl) \leq 2(k-1)m$ 
saddle moves to remove as many braid crossings; see e.g.~\cite[Figure 4]{BLMT}.
Thereby the values of all quantities involved in the statement, i.e.~the values of~$\sigma,d_\chi,g_4$, change only by an amount bounded by a fixed affine function $E(m)=am+b$. 
Thus, from now on we will work with the assumption~$m=kl$. 

Our next observation is that the standard braid diagram $(\sigma_1 \ldots \sigma_{l-1})^l$ of the torus link $T(l,l)$ can be transformed into a trivial $l$-component braid by a sequence of~$\frac{1}{2}(l-1)l$ positive-to-negative crossing changes. Indeed, after changing the last $l-1$ crossings of that braid diagram into negative crossings, the last strand of the braid ends up undercrossing all the other strands and can therefore be pulled straight using a braid isotopy. As a result, the first $l-1$ remaining strands form the standard braid for $T(l-1,l-1)$. Proceeding inductively, we obtain the trivial braid on $l$ strands after a total of $(l-1)+(l-2)+\ldots+1=\frac{1}{2}(l-1)l$ positive-to-negative crossing changes and corresponding braid isotopies. A more general argument is given in~\cite[Theorem~1.2]{KLMMS}.

As a consequence, the standard braid diagram $(\sigma_1 \ldots \sigma_{l-1})^{l+1}$ of the torus knot $T(l,l+1)$ can be transformed into the braid $(\sigma_1 \ldots \sigma_{l-1})$ representing the trivial knot by a sequence of $\frac{1}{2}(l-1)l$ positive-to-negative crossing changes. 

Let $C_{p,q}(K)$ denote the $(p,q)$-cable of a knot $K$. 
We continue by considering the link~$L_k = {C_{k,0}(T(l,l+1))}$ with $k$ components.
A natural diagram of $L_k$ is obtained from the braid diagram of $T(l,l+1)$ by replacing each crossing with the local picture shown on the left side of Figure~1 (for $k=3$). Here the negative full twist accounts for the correct framing. 
By the previous paragraph, using $\frac{1}{2}(l-1)l$ positive null-homologous twists, the link $L_k$ can be transformed into the $(k,0)$-cable of the trivial knot $T(l,1)$, in other words, the $k$-component trivial link.

The following little alteration transforms the link $L_k$ into a knot~$K_k$: insert the braid $(\sigma_1 \sigma_2 \cdots \sigma_{k-1})$ on the left of the braid diagram representing $L_k$. The construction is chosen so that the knot $K_k$ can be transformed into the trivial knot by $\frac{1}{2}(l-1)l$ positive null-homologous twists. Note that 
$K_k = C_{k,1}(T(l,l+1))$.
Using the fact that $k$ is even, the famous cabling formula for the signature invariant by Litherland~\cite[Theorem 2]{Lith}
implies $\sigma(K_k)=0$ (see also~\cite[Proposition~1]{C}). 

In summary, we have a knot $K_k$ of signature zero, related to the trivial knot by a sequence of $\frac{1}{2}(l-1)l$ positive null-homologous twists. 

Now we choose a torus knot $T(k,n)$ with $n \approx \frac{2m^2}{k^3}=\frac{2l^2}{k}$. More precisely, we choose $n \in \N$ with $\gcd(k,n)=1$ and $|n-\frac{2l^2}{k}|<k$, which is possible. 
The recursive formula for the signature invariant of torus links by Gordon, Litherland and Murasugi implies that $\sigma(T(k,n))\approx\frac{kn}{2}\approx l^2$, 
up to a constant error~\cite[Theorem~5.2]{GLM}.
By~\cite{BBL}, the torus knot $T(k,n)$ is related to the trivial knot by a sequence of $\frac{1}{2}\sigma(T(k,n))\approx \frac{1}{2}l^2$ positive null-homologous twists, up to a constant error. In turn, the connected sum of knots $K_k \# T(k,-n)$ is related to the trivial knot by a sequence of $\frac{1}{2}(l-1)l$ positive and $\frac{1}{2}l^2$ negative null-homologous twists. McCoy's result thus implies
$$g_4(K_k \# T(k,-n)) \leq \frac{1}{2}l^2,$$
up to a constant error.
On the other hand, since the signature is additive under
connected sum, we have 
$$\sigma(K_k \# T(k,-n))=\sigma(K_k)+\sigma(T(k,-n))\approx -\frac{kn}{2}\approx -l^2.$$
Thus $g_4(K_k \# T(k,-n))\approx \frac{1}{2}l^2$, 
since $|\sigma(K)|$ is a lower bound for~$2g_4(K)$. Using $2g_4(K_k \# T(k,-n)) = d_\chi(K_k,T(k,n))$, we obtain 
$$d_\chi(K_k,T(k,n))\approx l^2\approx \sigma(T(k,n))-\sigma(K_k).$$

In the final step, we simply increase the framing on both knots $K_k$ and $T(k,n)$ by the same amount, by inserting the braid
$$(\sigma_1 \sigma_2 \cdots \sigma_{k-1})^{kl^2}$$
into $k$ parallel strands on both knots.
This turns $K_k = C_{k,1}(T(l,l+1))$ into $C_{k,1+kl^2}(T(l,l+1))$,
which is a smooth cobordism consisting of less than $mk$ saddles
away from the link $C_{k,kl+kl^2}(T(l,l+1)) = T(kl,kl+k) = T(m,m+k)$. (To see the second-to-last equality, note that more generally, $C_{p,pqr}(T(q,r))=T(pq,pr)$ for positive integers $p,q,r$.)
It turns $T(k,n)$ into ${T(k,kl^2+n)}$,
which is a smooth cobordism consisting of less than $k^2$ saddles
away from $T(k,\frac{m^2}{k}+\frac{2m^2}{k^3})$.
We observe that inserting the braid $(\sigma_1 \sigma_2 \cdots \sigma_{k-1})^{kl^2}$ does not affect the cobordism distance of the knots $K_k$ and $T(k,n)$, nor the difference of their signature invariants, by more than a constant term. Similarly, the applied saddles do not affect $d_\chi$ and $\sigma$ by more than an affine error in $m$. 
Therefore we obtain
$$d_\chi(T(m,m+k),T(k,\frac{m^2}{k}+\frac{2m^2}{k^3}))\approx \sigma(T(k,\frac{m^2}{k}+\frac{2m^2}{k^3}))-\sigma(T(m,m+k)).$$
This proves Theorem~\ref{bound246} for $N=\frac{m^2}{k}+\frac{2m^2}{k^3}$. Here the change from $T(m,m+1)$ to $T(m,m+k)$ is not an issue, since we can transform $T(m,m+k)$ to $T(m,m+1)$ by $(k-1)(m-1)\leq 5(m-1)$ saddles. 

For larger parameters, we simply twist the knot $T(k,N)$ further up, using the fact that
$$d_\chi(T(k,N+x)),T(k,N))\approx \sigma(T(k,N+x))-\sigma(T(k,N)),$$
up to a constant error,
which is true for $k\in \{2,3,4,6\}$ thanks to~\cite{BBL}.

For each $k\in \{2,4,6\}$, the above proof provides us with constants $a, b > 0$ as in the statement of the theorem. To obtain global constants for all $k\in \{2,4,6\}$, we choose the maxima of these constants. 
\end{proof}

\begin{proof}[Proof of Theorem~\ref{bound3}]
The cabling construction used in the above proof for the cases $k\in \{2,4,6\}$ carries over to the case $k=3$. However, there is an important change in the computation of the signature invariant of $(3,1)$-cables: 
unlike in the case of $C_{k,1}(K)$ for even $k$, the cable knot~$C_{k,1}(K)$ for odd~$k$ does not generally have a vanishing signature for any knot $K$.

However, there is another value of the Levine--Tristram signature $\sigma_{\omega}$, for which this is true: $\omega=\zeta_{k}$, where $\zeta_{k} \in \C$ is any root of unity of order~$k$, 
see~\cite[Theorem 2]{Lith}. 
Therefore, in the case $k=3$, we can simply replace the classical signature invariant $\sigma(K)$ by $\sigma_{\zeta_3}(K)$. The attentive reader might object that in order to copy the proof of Theorem~\ref{bound246}, we also need the existence of a torus knot $T(3,n)$ with $\sigma(T(3,n))=l^2$, for all $l \in \N$, and the additional property that $T(3,n)$ can be unknotted by about $\frac{1}{2}l^2$ positive null-homologous twists. Surprisingly, this also works with $\sigma$ replaced by $\sigma_{\zeta_3}$, for the simple reason that
$$\sigma_{\zeta_3}(T(3,n))=\sigma(T(3,n)),$$
up to an error of at most two. This last equality, which is due to Gambaudo--Ghys \cite[Proposition~5.2]{GG}, can also be used to find the exact condition $N \geq \frac{5}{12}m^2$, as stated in Theorem~\ref{bound3}. The details are not important enough to be included here, since the real interest of Theorem~\ref{bound3} lies in the qualitative statement.
\end{proof}

\bigskip
\noindent
\texttt{sebastian.baader@math.unibe.ch}

\medskip
\noindent
\texttt{lukas.lewark@math.ethz.ch}

\medskip
\noindent
\texttt{filip.misev@mathematik.uni-regensburg.de}

\medskip
\noindent
\texttt{paulagtruoel@gmail.com}

\bigskip
\noindent
Mathematisches Institut, Sidlerstrasse 5, 3012 Bern, Switzerland

\medskip
\noindent
ETH Z\"urich, R\"amistrasse 101, 8092 Z\"urich, Switzerland

\medskip
\noindent
Universit\"at Regensburg, 93053 Regensburg, Germany

\medskip
\noindent
Max-Planck-Institut f\"ur Mathematik, Vivatsgasse 7, 53111 Bonn, Germany

\end{document}